\newtheorem{thm}{Theorem}[section]
\newtheorem{lem}[thm]{Lemma}
\newtheorem{exm}[thm]{Example}
\newtheorem{prop}[thm]{Proposition}
\theoremstyle{definition}
\newtheorem{defn}[thm]{Definition}
\theoremstyle{remark}
\numberwithin{equation}{section}
\begin{document}
\title[Totally reflexive extensions and modules]{Totally reflexive extensions and modules}

\author[  Xiao-Wu Chen
] {Xiao-Wu Chen}

\thanks{The author is supported by the Fundamental Research Funds for the Central Universities (WK0010000024), and National Natural Science Foundation of China (No.10971206).}
\subjclass[2010]{16G50, 13B02, 16E65}
\date{\today}

\thanks{E-mail:
xwchen$\symbol{64}$mail.ustc.edu.cn}
\keywords{reflexive module, totally reflexive module, reflexive extension, totally reflexive extension, Gorenstein-projective module}%

\maketitle

\dedicatory{}%
\commby{}%

\begin{abstract}
We introduce the notion of totally reflexive extension of rings. It unifies Gorenstein orders and Frobenius extensions. We prove that for a totally reflexive extension, a module over the extension ring is totally reflexive if and only if its underlying module over the base ring is totally reflexive.
\end{abstract}

\section{Introduction}

The study of totally reflexive modules goes back to Auslander \cite{Au, ABr}, and is highlighted by
the work of Buchweitz on Tate cohomology of Gorenstein rings \cite{Buc}. Totally reflexive modules play
an important role in singularity theory \cite{Buc, Yoh}, cohomology theory of commutative rings \cite{AM, CFH}
and representation theory of artin algebras \cite{AR, RZ}. There are several different terminologies in the literature for these modules, such as modules of G-dimension zero \cite{ABr}, maximal Cohen-Macaulay modules \cite{Buc} and (finitely generated) Gorenstein-projective modules \cite{EJ1, Ch3}.

Recall that a ring extension means a homomorphism  $\theta\colon S\rightarrow R$ between two rings; thus $S$ is the base ring and
$R$ is the extension ring. We are interested in the following question.

What kind of ring extensions $\theta\colon S\rightarrow R$ satisfy the following condition: any $R$-module $X$ is totally reflexive if and only if the underlying $S$-module $X$ is totally reflexive?

We recall two well-known examples. Some results in \cite[Chapter III]{Aus} and \cite[Section 7]{Buc} suggest that a ring extension $\theta\colon S\rightarrow R$  satisfies the above condition, provided that $S$ is a commutative Gorenstein  ring and $R$ is a Gorenstein $S$-order. This generalizes the following classical example. Let $S=\mathbb{Z}$, and let $R=\mathbb{Z}G$ be the integral group ring of a finite group $G$. Then a $\mathbb{Z}G$-module $X$ is  totally reflexive if and only if the underlying $\mathbb{Z}$-module $X$ is totally reflexive, or equivalently, the abelian group $X$ is  free of finite rank; compare \cite[Section 8]{Buc}.

 For another example, let $S=kQ$ be the path algebra of a finite acyclic quiver $Q$ over a field $k$  and
 $A$ a finite dimensional Frobenius algebra. Set $R=S\otimes_k A$ to be the tensor product. Then the natural embedding $S\rightarrow R$ satisfies the above condition. Consequently, an $R$-module $X$ is totally reflexive if and only if the underlying $S$-module $X$ is totally reflexive, or equivalently, the corresponding representation $X$ of the quiver $Q$ is projective; compare \cite{Ch2, RZ, Wei}. Here, we recall the fact that $S=kQ$ is hereditary. We observe that this natural embedding $S\rightarrow R$  is a Frobenius extension  \cite{NT, Kad}. Note that the above classical example $\mathbb{Z}\rightarrow \mathbb{Z}G$ is also a Frobenius extension. So one expects that a Frobenius extension $\theta\colon S\rightarrow R$ might satisfy the above condition.

 We introduce the notion of totally reflexive extension. It is a ring extension $\theta\colon S\rightarrow R$ subject to two conditions: (1) the natural $S$-module  $R$ is totally reflexive on both sides; (2) there is an $R$-$S$-bimodule isomorphism ${\rm Hom}_S(R, P)\simeq W$ for some invertible $S$-bimodule $P$ and invertible $R$-bimodule $W$. Here, ${\rm Hom}_S(R, P)$ denotes
 the abelian group consisting of left $S$-module homomorphisms, which carries a natural $R$-$S$-bimodule structure.

  The new notion above unifies Gorenstein orders \cite{Aus} and Frobenius extensions \cite{NT}. Indeed, it unifies their generalizations such as Gorenstein algebras \cite{ABr} and Frobenius extensions of the third kind \cite{KS}.

The main result gives a partial answer to the above question.

\vskip 5pt

\noindent {\bf Theorem.} {\em Let $\theta\colon S\rightarrow R$ be a totally reflexive extension with respect to an invertible $S$-bimodule $P$ and an invertible $R$-bimodule $W$. Then an $R$-module $X$ is totally reflexive if and only if the underlying $S$-module $X$ is totally reflexive.}

\vskip 5pt

We mention that a related result is given in \cite{HS} for (weak) excellent extensions of rings. These extensions are not
totally reflexive extensions in general.

The paper is organized as follows. In Section 2, we introduce the notion of reflexive extension  and show that it is left-right symmetric; see Definition \ref{defn:1} and Proposition \ref{prop:symm}.  This notion is justified by a similar result as Theorem, where one replaces ``totally reflexive" by ``reflexive"; see Proposition \ref{prop:2}. In Section 3, we introduce the notion of totally reflexive extension, which is also left-right symmetric; see Definition \ref{defn:2}.  We point out that this notion unifies Gorenstein algebras and Frobenius extensions of the third kind; see Example \ref{exm:1}. Then we prove the main result; see Theorem \ref{thm:1}.

All rings in the paper are associative rings with a unit. Homomorphisms of rings are required to send the unit to the unit.

\section{Reflexive extensions}

In this section, we introduce the notion of reflexive extension of rings. This notion is justified by
the following result: in a reflexive extension, any module over the extension ring is reflexive if and
only the underlying module over the base ring is reflexive. The left-right symmetric property
of reflexive extensions is proved.

\subsection{Reflexive modules} Let $S$ be a ring. A left $S$-module $X$ is sometimes written as $_SX$.  We denote
by $S\mbox{-Mod}$ the category of left $S$-modules. For two
left $S$-modules $_SX$ and $_SY$, denote by ${\rm Hom}_S(X, Y)$ the abelian group consisting of
left $S$-homomorphisms between them. A right $S$-module $M$ is written as $M_S$. We identify right
$S$-modules as left $S^{\rm op}$-modules, where $S^{\rm op}$ is the opposite ring of $S$.
Hence, for two right $S$-modules $M_S$ and $N_S$, we denote by ${\rm Hom}_{S^{\rm op}}(M, N)$
the abelian group consisting of right $S$-homomorphisms between them. The $S$-action on modules is usually denoted by
``.".

Let $R$ and $T$ be another two rings. Assume that $_SX_R$ and $_SY_T$ are an $S$-$R$-bimodule and
an $S$-$T$-bimodule, respectively. Then the abelian group ${\rm Hom}_S(X, Y)$ is naturally an
$R$-$T$-bimodule in the following way: for $r\in R$, $f\in {\rm Hom}_S(X, Y)$ and $t\in T$,
$(r.f.t)(x)=f(x.r).t$ for all $x\in X$.

Let $P$ be an $S$-bimodule. For a left $S$-module $X$,  the \emph{evaluation map} with values in $P$
$${\rm ev}^P_X\colon X\longrightarrow {\rm Hom}_{S^{\rm op}}({\rm Hom}_S(X, P), P),$$
 defined as ${\rm ev}^P_X(x)(f)=f(x)$, is a left $S$-homomorphism. The module $_SX$ is $P$-\emph{reflexive} (resp. $P$-\emph{torsionless}), provided that
 ${\rm ev}^P_X$ is an isomorphism (resp. a monomorphism). In case that $P={_SS_S}$ is the regular bimodule, ${\rm ev}_X^P$ is abbreviated as ${\rm ev}_X$, and $P$-reflexive (resp. $P$-torsionless) modules are called  \emph{reflexive modules} (resp. \emph{torsionless modules}).  For example, a finitely generated projective $S$-module is reflexive. We have similar notions for right modules.

 We recall that an $S$-bimodule $P$ is \emph{invertible} provided that the endofunctor ${\rm Hom}_S(P, -)$ on the
 category $S\mbox{-Mod}$ of left $S$-modules is an equivalence. For example, the regular bimodule $_SS_S$ is
 invertible.

 The following fact is taken from  \cite[Chapter 12]{Fai73}.

 \begin{lem}\label{lem:inv}
 Let $P$ be an invertible $S$-bimodule. The following statements hold:
 \begin{enumerate}
 \item both $_SP$ and $P_S$ are finitely generated projective generators;
 \item  the natural ring homomorphisms $S\rightarrow {\rm End}_{S^{\rm op}}(P)$ and $S^{\rm op}\rightarrow {\rm End}_{S}(P)$,
 induced by the left $S$-action and right $S$-action, respectively, are isomorphisms;
 \item  both the endofunctors $-\otimes_SP$ and ${\rm Hom}_{S^{\rm op}}(P, -)$ on  the category $S^{\rm op}\mbox{-{\rm Mod}}$ of right $S$-modules are  equivalences. \hfill $\square$
 \end{enumerate}
 \end{lem}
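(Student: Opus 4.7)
The plan is to reduce the lemma to classical Morita theory via the Eilenberg--Watts theorem. Since $F=\mathrm{Hom}_S(P,-)\colon S\mbox{-Mod}\to S\mbox{-Mod}$ is an equivalence, it admits a left adjoint $G$, necessarily quasi-inverse to $F$; being cocontinuous and right exact, $G$ is naturally isomorphic to $Q\otimes_S-$ for some $(S,S)$-bimodule $Q$ by Eilenberg--Watts. Evaluating the natural isomorphisms $G\circ F\simeq \mathrm{id}$ and $F\circ G\simeq \mathrm{id}$ at the regular bimodule $_SS_S$ and tracking bimodule structures via naturality yields
\[
Q\otimes_S P\;\cong\;{}_SS_S\qquad\text{and}\qquad P\otimes_S Q\;\cong\;{}_SS_S,
\]
so $P$ is invertible in the classical sense with inverse bimodule $Q$. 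From these isomorphisms and associativity of the tensor product, the endofunctors $-\otimes_SP$ and $-\otimes_SQ$ on $S^{\mathrm{op}}\mbox{-Mod}$ are mutually quasi-inverse, so $-\otimes_SP$ is an equivalence; the tensor--hom adjunction $-\otimes_SP\dashv\mathrm{Hom}_{S^{\mathrm{op}}}(P,-)$ then exhibits $\mathrm{Hom}_{S^{\mathrm{op}}}(P,-)$ as a quasi-inverse of this equivalence, hence also an equivalence, establishing (3).

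For (1), writing $1_S=\sum_i q_i\otimes p_i$ in $Q\otimes_S P\cong S$ and defining $\phi_i\in\mathrm{Hom}_S(P,S)$ by $p\mapsto [p\otimes q_i]$ under $P\otimes_S Q\cong S$, one verifies the dual basis relation $p=\sum_i\phi_i(p)\cdot p_i$, so $_SP$ is finitely generated projective. Expanding $1_S$ in $P\otimes_S Q\cong S$ similarly produces elements of the trace ideal of $_SP$ summing to $1_S$, which makes $_SP$ a generator. The same argument with the two bimodule isomorphisms swapped yields that $P_S$ is a finitely generated projective generator. For (2), the canonical left $S$-action on $\mathrm{Hom}_S(P,X)$ factors through the ring map $\rho\colon S^{\mathrm{op}}\to\mathrm{End}_S(P)$, $s\mapsto (p\mapsto p.s)$; by classical Morita theory applied to the progenerator $_SP$, the functor $\mathrm{Hom}_S(P,-)$ is an equivalence $S\mbox{-Mod}\to\mathrm{End}_S(P)^{\mathrm{op}}\mbox{-Mod}$, and combining with the hypothesis shows that restriction of scalars along $\rho$ is an equivalence, forcing $\rho$ to be a ring isomorphism. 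The companion isomorphism $S\to\mathrm{End}_{S^{\mathrm{op}}}(P)$ is obtained by the mirror argument applied to the progenerator $P_S$, using the equivalence $\mathrm{Hom}_{S^{\mathrm{op}}}(P,-)$ on $S^{\mathrm{op}}\mbox{-Mod}$ established in (3).

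The principal difficulty here is not mathematical but notational: the lemma packages four directional statements---one equivalence on each of $S\mbox{-Mod}$ and $S^{\mathrm{op}}\mbox{-Mod}$, together with two endomorphism-ring identifications---each sensitive to the choice of side and to opposite-ring conventions. The chief care required is to align everything correctly with the canonical Morita setup. All underlying ingredients (the Eilenberg--Watts theorem, the characterization of progenerators by self-equivalences of hom-functors, and the tensor--hom adjunction) are entirely standard.
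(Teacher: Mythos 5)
The paper does not actually prove this lemma: it is stated as a known fact, quoted from Faith's book (\cite[Chapter~12]{Fai73}), with no argument given. So there is no in-paper proof to compare against; your job was to supply the standard Morita-theoretic derivation, and that is indeed the route you take. The overall architecture --- reduce the functorial definition of invertibility to the classical one (existence of an inverse bimodule) via Eilenberg--Watts, then read off progenerator, endomorphism-ring, and opposite-side statements from classical Morita theory --- is correct and is surely what the citation intends.

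There is, however, one concrete misstep in the reduction. The left adjoint $G$ of $F=\mathrm{Hom}_S(P,-)$ is \emph{canonically} $P\otimes_S-$ by the tensor--hom adjunction, so the Eilenberg--Watts bimodule $Q$ representing $G$ is $P$ itself (up to isomorphism); with that $Q$ your displayed relations would read $P\otimes_SP\cong S$, which is false in general. Moreover, literally evaluating $G\circ F\simeq\mathrm{id}$ at $S$ gives $Q\otimes_S\mathrm{Hom}_S(P,S)\cong S$, and evaluating $F\circ G\simeq\mathrm{id}$ at $S$ gives $\mathrm{Hom}_S(P,Q)\cong S$ --- neither is the isomorphism you assert. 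The fix is to apply Eilenberg--Watts to the equivalence $F$ itself (it is cocontinuous, being an equivalence), obtaining $F\cong\mathrm{Hom}_S(P,S)\otimes_S-$; then evaluating the two composites of $P\otimes_S-$ and $\mathrm{Hom}_S(P,S)\otimes_S-$ at $S$ yields
\[
P\otimes_S\mathrm{Hom}_S(P,S)\;\cong\;{}_SS_S\;\cong\;\mathrm{Hom}_S(P,S)\otimes_SP,
\]
so the inverse bimodule is $\mathrm{Hom}_S(P,S)$, not the bimodule representing $G$. Since everything downstream in your argument uses only the \emph{existence} of some bimodule $Q$ with $Q\otimes_SP\cong S\cong P\otimes_SQ$, the rest goes through after this correction: the dual-basis and trace-ideal computations for (1) are standard (you should note that the two isomorphisms must be normalized to be compatible under associativity for the identity $p=\sum_i\phi_i(p).p_i$ to hold on the nose), the observation that restriction of scalars along a ring map is an equivalence only if the map is an isomorphism correctly yields (2), and the adjunction argument for (3) is fine.
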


The following result is well known.

\begin{lem}\label{lem:wellknown}
Let $P$ be an invertible $S$-bimodule. Then a left $S$-module $X$ is reflexive (resp. torsionless) if and only if it is $P$-reflexive ($P$-torsionless).
\end{lem}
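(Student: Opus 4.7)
The plan is to construct a natural left $S$-module isomorphism
\[
\Phi_X\colon {\rm Hom}_{S^{\rm op}}({\rm Hom}_S(X,S),S)\longrightarrow {\rm Hom}_{S^{\rm op}}({\rm Hom}_S(X,P),P)
\]
that intertwines the two evaluation maps, i.e.\ $\Phi_X\circ {\rm ev}_X={\rm ev}^P_X$. Once $\Phi_X$ is known to be an isomorphism, ${\rm ev}_X$ will be an isomorphism (resp.\ a monomorphism) if and only if ${\rm ev}^P_X$ is. The construction rests on two ingredients provided by Lemma~\ref{lem:inv}: the left $S$-module $P$ is finitely generated projective, and the endofunctor $-\otimes_S P$ on right $S$-modules is an equivalence.

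For the first ingredient, I would establish the natural isomorphism of right $S$-modules
\[
\alpha_X\colon {\rm Hom}_S(X,S)\otimes_S P\longrightarrow {\rm Hom}_S(X,P),\qquad f\otimes p\longmapsto (x\mapsto f(x).p),
\]
by reducing through the standard dévissage $P=S\Rightarrow P=S^n\Rightarrow P$ a summand of $S^n$, which is legitimate because $_SP$ is finitely generated projective. For the second ingredient, since $-\otimes_S P$ is an equivalence of categories of right $S$-modules, it induces a bijection
\[
{\rm Hom}_{S^{\rm op}}({\rm Hom}_S(X,S),S)\xrightarrow{\;\sim\;}{\rm Hom}_{S^{\rm op}}({\rm Hom}_S(X,S)\otimes_S P,\;S\otimes_S P),\qquad \phi\longmapsto \phi\otimes{\rm id}_P.
\]
Composing with $\alpha_X^{-1}$ on the source and the canonical iso $S\otimes_S P\simeq P$ on the target produces the desired map $\Phi_X$, which is an isomorphism of abelian groups; the compatibility of $-\otimes_S P$ with the remaining left $S$-actions on $S$ and on $P$ then upgrades $\Phi_X$ to a left $S$-module isomorphism.

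It remains to verify the compatibility $\Phi_X\circ {\rm ev}_X={\rm ev}^P_X$. For $x\in X$ and $g=\alpha_X(f\otimes p)\in {\rm Hom}_S(X,P)$, unwinding the definitions yields
\[
\Phi_X({\rm ev}_X(x))(g)\;=\;{\rm ev}_X(x)(f).p\;=\;f(x).p\;=\;g(x)\;=\;{\rm ev}^P_X(x)(g),
\]
and this extends to all $g$ by linearity. The torsionless case follows from the same commutative square. The only real obstacle is the careful tracking of bimodule conventions (so that $\Phi_X$ is genuinely an isomorphism of left $S$-modules), but this is bookkeeping rather than substance once the conventions fixed at the start of the section are respected.
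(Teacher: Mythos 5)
Your proposal is correct and follows essentially the same route as the paper: the paper's proof uses exactly your isomorphism $\alpha_X$ (there called $t$, valid because $_SP$ is finitely generated projective) together with the fully faithful equivalence $-\otimes_S P$ on right $S$-modules to build a commutative square intertwining ${\rm ev}_X$ and ${\rm ev}_X^P$. Your map $\Phi_X$ is precisely the composite of the two vertical isomorphisms in the paper's diagram, and your verification computation matches.
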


\begin{proof}
Recall that the left $S$-module $_SP$ is finitely generated projective. Hence, the natural homomorphism
 \begin{align}\label{equ:2}
 t\colon {\rm Hom}_S(X, S)\otimes_SP\longrightarrow{\rm Hom}_S(X, P),\end{align}
 defined by $t(f\otimes p)(x)=f(x).p$, is an isomorphism. The following diagram commutes
\[\xymatrix{
X\ar@{=}[dd] \ar@{=}[r]^-{{\rm ev}_X} & {\rm Hom}_{S^{\rm op}}({\rm Hom}_S(X, S), S)\ar[d]^-{-\otimes_S P} \\
& {\rm Hom}_{S^{\rm op}}({\rm Hom}_S(X, S)\otimes_S P, P) \ar[d]^-{{\rm Hom}_{S^{\rm op}}(t, P)} \\
X \ar[r]^-{{\rm ev}_X^P} & {\rm Hom}_{S^{\rm op}}({\rm Hom}_S(X, P), P).
}\]
Here, we identify $P$ with $S\otimes_S P$. Recall that the endofunctor $-\otimes_SP$ on $S^{\rm op}\mbox{-Mod}$ is an equivalence. Then in the diagram above,
the map ${\rm ev}_X$ is an isomorphism (resp. a monomorphism) if and only if so is ${
\rm ev}_X^P$.
\end{proof}

\subsection{Reflexive extensions} Recall that a ring extension is a ring homomorphism $\theta\colon S\rightarrow R$.  Thus $S$ is the base ring and $R$ is the extension ring. We have that via $\theta$, $R$ becomes an $S$-bimodule. We sometimes consider $R$ as an $R$-$S$-bimodule or an $S$-$R$-bimodule. Similarly, an $R$-module $X$ is naturally an $S$-module, which is referred as  the \emph{underlying} $S$-module.

 \begin{defn}\label{defn:1} Let $\theta\colon S\rightarrow R$ be a ring extension. Let $P$ be an invertible $S$-bimodule and
 $W$ an invertible $R$-bimodule. The ring extension $\theta$ is  $P$-$W$-\emph{reflexive}, provided that the following two conditions are satisfied:
 \begin{enumerate}
 \item the left $S$-module $_SR$ is reflexive;
 \item there is an isomorphism ${\rm Hom}_S(R, P)\simeq W$ of $R$-$S$-bimodules.
 \end{enumerate}
 In case that $P={_SS_S}$ and $W={_RR_R}$ are the corresponding regular bimodules, a $P$-$W$-reflexive extension is called a \emph{reflexive extension}. \hfill $\square$
 \end{defn}

The following result implies that the above notion is left-right symmetric: a ring extension $\theta\colon S\rightarrow R$ is
$P$-$W$-reflexive if and only if the corresponding extension $\theta^{\rm op}\colon S^{\rm op}\rightarrow R^{\rm op}$ is $P$-$W$-reflexive. It is analogous to a classical result \cite[Theorem 1.2]{Kad} of Frobenius extensions.

\begin{prop}\label{prop:symm}
Let $\theta\colon S\rightarrow R$ be a ring extension. Assume that $P$ is an invertible $S$-bimodule and that $W$ is an invertible $R$-bimodule. Then the following statements are equivalent:
\begin{enumerate}
\item the ring extension $\theta$ is $P$-$W$-reflexive;
\item there is an $S$-bimodule homomorphism $\phi\colon W\rightarrow P$ such that the the induced maps
$$l_\phi\colon W\longrightarrow {\rm Hom}_S(R, P) \mbox{ and } r_\phi\colon {W} \longrightarrow {\rm Hom}_{S^{\rm op}}(R, P),$$
defined by $l_\phi(w)=\phi(-.w)$ and $r_\phi(w)=\phi(w.-)$, are bijective.
\item the right $S$-module $R_S$ is reflexive, and there is an isomorphism of $S$-$R$-bimodules ${\rm Hom}_{S^{\rm op}}(R, P) \simeq W$.
\end{enumerate}
\end{prop}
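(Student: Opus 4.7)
The plan is to prove $(1)\Leftrightarrow(2)$ directly, and then to deduce $(2)\Leftrightarrow(3)$ by applying this equivalence to the opposite extension $\theta^{\mathrm{op}}\colon S^{\mathrm{op}}\to R^{\mathrm{op}}$. This works because condition $(2)$ is manifestly invariant under that passage---an $S$-bimodule map $\phi\colon W\to P$ is the same as an $S^{\mathrm{op}}$-bimodule map, and the roles of $l_\phi$ and $r_\phi$ are interchanged---while condition $(1)$ for $\theta^{\mathrm{op}}$ coincides with condition $(3)$ for $\theta$.

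For $(1)\Rightarrow(2)$, I start from the given $R$-$S$-bimodule isomorphism $\psi\colon W\xrightarrow{\sim}{\rm Hom}_S(R,P)$ and set $\phi(w)=\psi(w)(1)$. The $S$-bilinearity of $\phi$ is a direct check from $\psi$ being $R$-$S$-bilinear, and $l_\phi=\psi$ follows from $l_\phi(w)(r)=\phi(r.w)=\psi(r.w)(1)=(r.\psi(w))(1)=\psi(w)(1.r)=\psi(w)(r)$, so $l_\phi$ is bijective. The substantive step is the bijectivity of $r_\phi$. Since $_SR$ is reflexive and hence $P$-reflexive by Lemma~\ref{lem:wellknown}, composing the $P$-evaluation map with $\psi$ yields an $(S,R)$-bimodule isomorphism $R\xrightarrow{\sim}{\rm Hom}_{S^{\mathrm{op}}}(W,P)$, $r\mapsto \phi(r.-)$. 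By Lemma~\ref{lem:inv}, invertibility of $W$ forces $_RW$ to be finitely generated projective, hence to admit a dual basis $\{w_i,\delta_i\}\subseteq W\times{\rm Hom}_R(W,R)$. Combining this dual basis with the preceding isomorphism produces, in the spirit of Kadison's proof for Frobenius extensions, a two-sided inverse for $r_\phi$.

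The converse $(2)\Rightarrow(1)$ is then immediate in its first part, since $l_\phi$ is an $R$-$S$-bimodule map by direct verification and its bijectivity gives the required isomorphism; reflexivity of $_SR$ follows symmetrically, by using $r_\phi$ and the dual basis of $_RW$ to build an inverse of ${\rm ev}^P_R$. The principal obstacle, both in $(1)\Rightarrow(2)$ and in $(2)\Rightarrow(1)$, is the bijectivity of $r_\phi$: the right $R$-action on $W$ has no canonical counterpart on ${\rm Hom}_S(R,P)$, so this step cannot be extracted from $\psi$ by formal bimodule manipulation and must genuinely combine the finitely generated projective dual basis coming from invertibility of $W$ with the $P$-reflexivity of $_SR$.
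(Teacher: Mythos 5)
Your overall architecture is sound: defining $\phi(w)=\psi(w)(1)$, checking $S$-bilinearity, identifying $l_\phi$ with $\psi$, and reducing $(2)\Leftrightarrow(3)$ to $(1)\Leftrightarrow(2)$ for $\theta^{\rm op}$ (condition (2) is self-opposite with $l_\phi$ and $r_\phi$ interchanged, and condition (3) is condition (1) for $\theta^{\rm op}$) all either match or legitimately reorganize the paper's argument. The genuine gap is exactly the step you yourself single out as substantive: the bijectivity of $r_\phi$. You correctly produce the $(S,R)$-bimodule isomorphism $\Theta_0\colon R\stackrel{\sim}\longrightarrow{\rm Hom}_{S^{\rm op}}(W,P)$, $r\mapsto\phi(r.-)$, but the passage from $\Theta_0$ to the bijectivity of $r_\phi\colon W\rightarrow{\rm Hom}_{S^{\rm op}}(R,P)$ is asserted rather than proved: ``combining this dual basis with the preceding isomorphism produces \ldots\ a two-sided inverse'' is not an argument. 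Worse, the specific mechanism you propose is problematic. A dual basis $\{w_i,\delta_i\}$ for the left module $_RW$ has $\delta_i\in{\rm Hom}_R(W,R)$ only left $R$-linear, so composites such as $g\circ\delta_i$ for $g\in{\rm Hom}_{S^{\rm op}}(R,P)$ do not land in ${\rm Hom}_{S^{\rm op}}(W,P)$; moreover $\Theta_0$ controls $\phi(r.w)$ (the ring acting on the left of $W$) while $r_\phi$ involves $\phi(w.r)$ (the ring acting on the right), and the naive dual-basis formulas do not close up across this left/right mismatch. In Kadison's Frobenius setting the dual basis used is that of $R$ over $S$, not of $W$ over $R$, so the analogy does not transfer directly.

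The paper closes this gap by a different device: it pushes $\Theta_0$ through the adjunction ${\rm Hom}_{S^{\rm op}}(W\otimes_RR,P)\simeq{\rm Hom}_{R^{\rm op}}(W,{\rm Hom}_{S^{\rm op}}(R,P))$ to obtain an isomorphism $\Theta\colon R\rightarrow{\rm Hom}_{R^{\rm op}}(W,{\rm Hom}_{S^{\rm op}}(R,P))$ with $(\Theta(r)(w))(r')=\phi(r.w.r')$, observes that $\Theta$ factors as ${\rm Hom}_{R^{\rm op}}(W,r_\phi)$ composed with the canonical isomorphism $R\simeq{\rm Hom}_{R^{\rm op}}(W,W)$ of Lemma \ref{lem:inv}(2), and concludes that ${\rm Hom}_{R^{\rm op}}(W,r_\phi)$ is an isomorphism; since ${\rm Hom}_{R^{\rm op}}(W,-)$ is an auto-equivalence of $R^{\rm op}\mbox{-Mod}$ by Lemma \ref{lem:inv}(3), it reflects isomorphisms and $r_\phi$ is bijective. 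The same adjunction-plus-Morita device (with the map $\Xi$ and the isomorphism $l_\phi$) is what establishes the reflexivity of $R$ over $S$ in the implication out of (2), a step you likewise leave to an unverified ``build an inverse of ${\rm ev}_R^P$ from the dual basis''. You need to supply this argument, or a genuinely worked-out substitute, for the proof to be complete.
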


For each element $w\in W$, the left $S$-homomorphism $\phi(-.w)\colon R\rightarrow P$ is
defined as $\phi(-.w)(r)=\phi(r.w)$. Similarly, we have the right $S$-homomorphism $\phi(w.-)\colon R\rightarrow P$.

The $S$-bimodule homomorphism $\phi\colon W\rightarrow P$ is called the \emph{reflexive homomorphism} of the
ring extension $\theta$. We remark that the induced maps $l_\phi$ and $r_\phi$ are always an $R$-$S$-homomorphism and
an $S$-$R$-homomorphism, respectively.

\begin{proof}
``$(1)\Rightarrow (2)$" Denote by $\Phi\colon W\rightarrow {\rm Hom}_S(R, P)$ the isomorphism of $R$-$S$-bimodules
in the definition. We define $\phi\colon W\rightarrow P$ by $\phi(w)=\Phi(w)(1)$.
Observe that \begin{align}\label{equ:1}\phi(r.w)=\Phi(r.w)(1)=(r.\Phi(w))(1)=\Phi(w)(r),\end{align}
where the second equality uses that $\Phi$ is a left
$R$-homomorphism. This shows that $\Phi=l_\phi$; in particular, $l_\phi$ is bijective.

  We claim that $\phi$ is an $S$-bimodule homomorphism. Applying (\ref{equ:1}), we have $\phi(s.w)=\Phi(w)(\theta(s))=s.(\Phi(w)(1))$, since $\Phi(w)\colon R\rightarrow P$ is a left $S$-homomorphism. This proves that $\phi$ is a left $S$-homomorphism. It remains to show  $\phi(w.s)=\phi(w).s$. Recall that $\Phi(w.s)=\Phi(w).s$ for $s\in S$, since  $\Phi$ is a right $S$-homomorphism.  Then we have the following equalities \begin{align*}\phi(w.s)&=\Phi(w.s)(1)=(\Phi(w).s)(1)\\
                        &=\Phi(w)(1).s=\phi(w).s.\end{align*}

  It remains to show that $r_\phi$ is bijective. Observe that $r_\phi$ is a right $R$-homomorphism. Recall that $_SR$ is reflexive, and thus by Lemma \ref{lem:wellknown} the evaluation map ${\rm ev}_R^P$ is an isomorphism. Consider the following composite of isomorphisms
\begin{align*}
\Theta \colon R &\stackrel{{\rm ev}_R^P}\longrightarrow {\rm Hom}_{S^{\rm op}}({\rm Hom}_S(R, P), P)\stackrel{{\rm Hom}_{S^{\rm op}}(\Phi, P)}\longrightarrow {\rm Hom}_{S^{\rm op}}(W, P)\\
&\simeq {\rm Hom}_{S^{\rm op}}(W\otimes_R R, P) \simeq {\rm Hom}_{R^{\rm op}} (W, {\rm Hom}_{S^{\rm op}}(R, P)),
\end{align*}
where the last one is the adjoint isomorphism.  One computes that $(\Theta(r)(w))(r')=\phi(r.w.r')$.  It follows that
the following diagram commutes
\[\xymatrix{
R\ar[rr]\ar@{=}[d] && {\rm Hom}_{R^{\rm op}}(W, W)\ar[d]^-{{\rm Hom}_{R^{\rm op}}(W, r_\phi)}\\
R\ar[rr]^-{\Theta} &&  {\rm Hom}_{R^{\rm op}} (W, {\rm Hom}_{S^{\rm op}}(R, P)).
}\]
Here, the upper row is the isomorphism induced by the left $R$-action on the invertible $R$-bimodule $W$; see Lemma \ref{lem:inv}(2). It follows that that
${\rm Hom}_{R^{\rm op}}(W, r_\phi)$ is an isomorphism. Recall that ${\rm Hom}_{R^{\rm op}}(W, -)$ is an auto-equivalence
on $R^{\rm op}\mbox{-Mod}$; see Lemma \ref{lem:inv}(3). Then the homomorphism $r_\phi$ is an isomorphism.

``$(2)\Rightarrow (3)$" Recall that the map $r_\phi$ is a homomorphism of $S$-$R$-bimodules. Then we have the required isomorphism. It remains to show that the right $S$-module $R_S$ is reflexive.

Consider the composite of natural isomorphisms
\begin{align*}
\Xi \colon {\rm Hom}_S({\rm Hom}_{S^{\rm op}}(R, P), P)\stackrel{{\rm Hom}_S(r_\phi, P)}\longrightarrow &{\rm Hom}_S(W, P) \simeq {\rm Hom}_S(R\otimes_R W, P) \\
&\simeq {\rm Hom}_R(W, {\rm Hom}_S(R, P)),
\end{align*}
where the last one is the adjoint isomorphism. One computes that $(\Xi(f)(w))(r')=f(\phi(r'.w.-))$. Observe that $l_\phi$ is a left $R$-homomorphism, and thus an isomorphism.

The following diagram commutes by direct calculation
\[\xymatrix{
R\ar[d] \ar[rr]^-{{\rm ev}_R^P} && {\rm Hom}_S({\rm Hom}_{S^{\rm op}}(R, P), P) \ar[d]^-{\Xi}\\
{\rm Hom}_R(W, W) \ar[rr]^-{{\rm Hom}_R(W, l_\phi)} && {\rm Hom}_R(W, {\rm Hom}_S(R, P)), }\]
where the left vertical map is the isomorphism induced by the right $R$-action on the invertible bimodule $W$; see Lemma \ref{lem:inv}(2). This forces that
the evaluation map ${\rm ev}_R^P$ is an isomorphism. By Lemma \ref{lem:wellknown},  the right $S$-module $R_S$ is reflexive.

``$(3)\Rightarrow (1)$" Observe that (3) means exactly that the ring extension $\theta^{\rm op} \colon S^{\rm op}\rightarrow R^{\rm op}$ is $P$-$W$-reflexive. Then we obtain this implication by symmetry.
\end{proof}

We consider algebras over commutative rings. Let $S$ be a commutative ring. An $S$-\emph{algebra} is a ring extension
$\theta\colon S\rightarrow R$ such that the image lies in the center of $R$. In this case, ${\rm Hom}_S(R, S)$ is naturally an $R$-bimodule. We sometimes suppress $\theta$ and say that $R$ is an $S$-algebra.

We call an $S$-algebra $R$ is \emph{quasi-reflexive} provided that the $S$-module $_SR$ is reflexive and the $R$-bimodule ${\rm Hom}_S(R, S)$ is invertible; it is called \emph{reflexive}, provided that in addition there is an $R$-bimodule isomorphism ${\rm Hom}_S(R, S)\simeq R$.

\begin{lem}\label{lem:algebra}
Let $\theta\colon S\rightarrow R$ be an algebra such that $_SR$ is reflexive. Then the following statements are equivalent:
\begin{enumerate}
\item the ring extension $\theta$ is $S$-$W$-reflexive for some invertible $R$-module $W$;
\item the left $R$-module ${\rm Hom}_S(R, S)$ is a finitely generated projective generator;
\item the right $R$-module ${\rm Hom}_S(R, S)$ is a finitely generated projective generator.
\end{enumerate}
In this case, the $R$-bimodule ${\rm Hom}_S(R, S)$ is invertible.
\end{lem}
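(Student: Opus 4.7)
The plan is to prove the equivalence (1) $\Leftrightarrow$ (2) in detail and then to deduce (1) $\Leftrightarrow$ (3) by symmetry. Since $\theta(S) \subseteq Z(R)$, one has ${\rm Hom}_S(R, S) = {\rm Hom}_{S^{\rm op}}(R, S)$ with the same underlying $R$-bimodule structure; combining this with Proposition~\ref{prop:symm} applied to $\theta^{\rm op}\colon S^{\rm op} \to R^{\rm op}$, condition (3) for $\theta$ corresponds to (2) for $\theta^{\rm op}$ while (1) is self-symmetric, so (1) $\Leftrightarrow$ (3) will follow by running the argument below on the opposite extension. The direction (1) $\Rightarrow$ (2) is immediate: an $R$-$S$-bimodule isomorphism ${\rm Hom}_S(R, S) \simeq W$ is in particular a left $R$-isomorphism, and $_RW$ is a finitely generated projective generator by Lemma~\ref{lem:inv}(1).

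For (2) $\Rightarrow$ (1), take $W = {\rm Hom}_S(R, S)$ with its natural $R$-bimodule structure, given by $(r \cdot f)(x) = f(xr)$ on the left and $(f \cdot r)(x) = f(rx)$ on the right; the two actions commute and are well defined because $\theta(S) \subseteq Z(R)$. With this choice, the bimodule isomorphism required in the definition of $S$-$W$-reflexivity is tautological, so it suffices to show $W$ is invertible as an $R$-bimodule. Since $_RW$ is a finitely generated projective generator by (2), Morita theory reduces invertibility to verifying that the natural ring homomorphism $\Psi\colon R^{\rm op} \to {\rm End}_R(W)$, $r \mapsto \Psi_r := (f \mapsto f \cdot r)$, is a bijection.

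For this, use the restriction-coinduction adjunction to identify
\[
\Lambda\colon {\rm End}_R({\rm Hom}_S(R, S)) \stackrel{\sim}{\longrightarrow} {\rm Hom}_S({\rm Hom}_S(R, S), S), \qquad \phi \mapsto \bigl(f \mapsto \phi(f)(1)\bigr),
\]
and note that the reflexivity of $_SR$ makes the evaluation map ${\rm ev}_R^S\colon R \to {\rm Hom}_S({\rm Hom}_S(R, S), S)$, $r \mapsto (f \mapsto f(r))$, an isomorphism by Lemma~\ref{lem:wellknown}. A direct calculation gives $\Lambda \circ \Psi = {\rm ev}_R^S$, so $\Psi$ is bijective and hence a ring isomorphism. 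Consequently $W = {\rm Hom}_S(R, S)$ is an invertible $R$-bimodule, which establishes (1) together with the concluding assertion of the lemma. The main obstacle is purely book-keeping: tracking the several $R$-bimodule structures on ${\rm Hom}_S(R, S)$ and verifying the identity $\Lambda \circ \Psi = {\rm ev}_R^S$, after which invertibility drops out formally.
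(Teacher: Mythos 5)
Your proposal is correct and follows essentially the same route as the paper: (1)$\Rightarrow$(2) via Lemma \ref{lem:inv}(1), and (2)$\Rightarrow$(1) by reducing, through Morita theory, to showing that the right-multiplication map $R^{\rm op}\to {\rm End}_R({\rm Hom}_S(R,S))$ is bijective, which is identified with ${\rm ev}_R$ via the adjoint isomorphism and the reflexivity of $_SR$. Your map $\Lambda$ is exactly the paper's $\gamma$, and your explicit treatment of (3) by passing to $\theta^{\rm op}$ (using centrality of the image of $\theta$) fills in a step the paper leaves implicit.
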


\begin{proof}
We only show the equivalence $(1)\Leftrightarrow (2)$. (1) implies (2), since an invertible bimodule
is a finitely generated projective generator on each side. On the other hand, observe that $W'={\rm Hom}_S(R, S)$
is an $R$-$R$-bimodule. Consider the ring homomorphism $\psi\colon R^{\rm op}\rightarrow {\rm End}_R(W')$ induced by the right $R$-action on $W'$. It suffices to show that it is an isomorphism. Then it follows from Morita theory that the $R$-module $W'$ is invertible, and
thus $\theta$ is $S$-$W'$-reflexive.

Consider the adjoint isomorphism $\gamma\colon {\rm Hom}_R(W', W')\stackrel{\sim}\longrightarrow {\rm Hom}_S({\rm Hom}_S(R, S), S)$. The following diagram commutes
\[\xymatrix{
R\ar@{=}[d]\ar[rr]^-{\psi} && {\rm Hom}_R(W', W')\ar[d]^-{\gamma}\\
R\ar[rr]^-{{\rm ev}_R} && {\rm Hom}_S({\rm Hom}_S(R, S), S).
}\]
By the assumption, ${\rm ev}_R$ is bijective. It follows that $\psi$ is bijective. We are done.
\end{proof}

\subsection{Preserving reflexivity} We will show that a reflexive extension $\theta\colon S\rightarrow R$ ``preserves reflexivity": an $R$-module $X$ is reflexive if and only if the underlying $S$-module $X$ is reflexive.

We need the following observation.

\begin{lem}\label{lem:phi}
Let $\theta\colon S\rightarrow R$ be a $P$-$W$-reflexive extension with $\phi\colon W\rightarrow P$ the
reflexive homomorphism.  Let $_RX$ and $Y_R$ be a left $R$-module and a right $R$-module, respectively. Then we have  the following:
\begin{enumerate}
\item there is an isomorphism of right $S$-modules $$t_X\colon{\rm Hom}_R(X, W)\stackrel{\sim}\longrightarrow {\rm Hom}_S(X, P),$$
sending $f$ to $\phi\circ f$;
\item there is an isomorphism of left $S$-modules
$$t_Y\colon {\rm Hom}_{R^{\rm op}}(Y, W)\stackrel{\sim}\longrightarrow {{\rm Hom}_{S^{\rm op}}(Y, P)},$$
sending $g$ to $\phi\circ g$.
\end{enumerate}
\end{lem}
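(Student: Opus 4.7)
The plan is to derive both isomorphisms as compositions involving the bijective maps $l_\phi$ and $r_\phi$ from Proposition~\ref{prop:symm}, together with the standard tensor-Hom adjunction. For part (1), I would factor $t_X$ as
\[
{\rm Hom}_R(X, W) \xrightarrow{l_\phi\circ -} {\rm Hom}_R(X, {\rm Hom}_S(R, P)) \xrightarrow{\sim} {\rm Hom}_S(R\otimes_R X, P) \xrightarrow{\sim} {\rm Hom}_S(X, P),
\]
where the first arrow is post-composition with the $R$-$S$-bimodule isomorphism $l_\phi\colon W\xrightarrow{\sim}{\rm Hom}_S(R,P)$ from Proposition~\ref{prop:symm}, and the remaining two arrows are the adjunction and the canonical identification $R\otimes_R X\simeq X$. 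Each of these is an isomorphism of right $S$-modules (for post-composition this uses that $l_\phi$ is right $S$-linear; for the adjunction it is standard), so the composite is an isomorphism of right $S$-modules.

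The only remaining task is to identify the composite with the assignment $f\mapsto \phi\circ f$. Tracing through: $f\in {\rm Hom}_R(X, W)$ is sent first to $x\mapsto l_\phi(f(x))\colon r\mapsto \phi(r.f(x))$, then via the adjunction to $x\otimes r\mapsto \phi(r.f(x))$, and finally under $R\otimes_R X\simeq X$ to $x\mapsto \phi(1.f(x))=\phi(f(x))$. This agrees with $t_X(f)$, so $t_X$ is the claimed isomorphism. The right $S$-linearity of $t_X$ is also checked directly using that $\phi$ is a right $S$-homomorphism: $t_X(f).s$ and $t_X(f.s)$ both send $x$ to $\phi(f(x).s)=\phi(f(x)).s$.

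Part (2) follows by the same strategy applied on the other side: factor $t_Y$ as
\[
{\rm Hom}_{R^{\rm op}}(Y, W)\xrightarrow{r_\phi\circ -}{\rm Hom}_{R^{\rm op}}(Y, {\rm Hom}_{S^{\rm op}}(R, P))\xrightarrow{\sim}{\rm Hom}_{S^{\rm op}}(Y\otimes_R R, P)\xrightarrow{\sim}{\rm Hom}_{S^{\rm op}}(Y, P),
\]
using now the $S$-$R$-bimodule isomorphism $r_\phi\colon W\xrightarrow{\sim}{\rm Hom}_{S^{\rm op}}(R,P)$ of Proposition~\ref{prop:symm} together with the right-module version of the Hom-tensor adjunction. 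The same kind of verification shows the composite equals $g\mapsto\phi\circ g$, and left $S$-linearity is ensured by the left $S$-linearity of $\phi$.

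There is no real obstacle here; the conceptual work is already done in Proposition~\ref{prop:symm}, and what remains is purely a bookkeeping exercise to assemble the known isomorphisms into the stated form. The only minor care required is in tracking the bimodule structures to confirm that post-composition with $l_\phi$ (resp.\ $r_\phi$) really is right (resp.\ left) $S$-linear and that the adjunction respects the relevant one-sided module structures, but these are routine.
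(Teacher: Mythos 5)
Your proposal is correct and follows essentially the same route as the paper: the paper's proof also obtains $t_X$ by post-composing with the $R$-$S$-bimodule isomorphism $l_\phi\colon W\to{\rm Hom}_S(R,P)$ and then applying the tensor-Hom adjunction, with part (2) handled symmetrically via $r_\phi$. Your explicit tracing of the composite to identify it with $f\mapsto\phi\circ f$ and the check of one-sided $S$-linearity merely spell out details the paper leaves implicit.
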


\begin{proof}
(1) Recall that $l_\phi\colon W\rightarrow {\rm Hom}_S(R, P)$ is an isomorphism of $R$-$S$-bimodules. So we have the  isomorphism ${\rm Hom}_R(X, W)\simeq {\rm Hom}_R(X,  {\rm Hom}_S(R, P))$. Composing with an adjoint isomorphism, we have the required isomorphism $t_X$. A similar argument proves (2).
\end{proof}

The following result justifies the terminology ``reflexive extension".

\begin{prop}\label{prop:2}
Let $\theta\colon S\rightarrow R$ be a $P$-$W$-reflexive extension for  an invertible $S$-bimodule $P$ and an in vertible $R$-module $W$. Then a left $R$-module $_RX$ is reflexive (resp. torsionless) if and only if the underlying $S$-module $_SX$ is reflexive (resp. torsionless).
\end{prop}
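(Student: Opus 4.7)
The plan is to reduce, via Lemma \ref{lem:wellknown}, the reflexivity of $_RX$ (resp. $_SX$) to its $W$-reflexivity (resp. $P$-reflexivity), and then to use Lemma \ref{lem:phi} twice to identify the two evaluation maps in question.

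First I would apply Lemma \ref{lem:wellknown} on both sides: for the ring $R$ and the invertible $R$-bimodule $W$, the module $_RX$ is reflexive (resp. torsionless) if and only if the evaluation map
${\rm ev}_X^W\colon X\To {\rm Hom}_{R^{\rm op}}({\rm Hom}_R(X,W),W)$ is an isomorphism (resp. monomorphism); and for $S$ with the invertible $S$-bimodule $P$, the module $_SX$ is reflexive (resp. torsionless) if and only if ${\rm ev}_X^P$ has the corresponding property. So it suffices to show that ${\rm ev}_X^W$ is an isomorphism (resp. monomorphism) precisely when ${\rm ev}_X^P$ is.

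The second step uses Lemma \ref{lem:phi} twice. Part (1) applied to $X$ gives an isomorphism of right $S$-modules $t_X\colon {\rm Hom}_R(X,W)\To {\rm Hom}_S(X,P)$, $f\mapsto \phi\circ f$. Viewing $Y:={\rm Hom}_R(X,W)$ as a right $R$-module via the right action on $W$, part (2) applied to $Y$ produces an isomorphism of left $S$-modules
\[\tau\colon {\rm Hom}_{R^{\rm op}}(Y,W)\To {\rm Hom}_{S^{\rm op}}(Y,P),\quad g\mapsto \phi\circ g.\]
Combined with the canonical isomorphism $(t_X)^{\ast}\colon {\rm Hom}_{S^{\rm op}}({\rm Hom}_S(X,P),P)\To {\rm Hom}_{S^{\rm op}}(Y,P)$ given by precomposition with $t_X$, this canonically identifies the targets of ${\rm ev}_X^W$ and ${\rm ev}_X^P$.

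The key step is to verify commutativity of the square
\[\xymatrix{ X\ar[rr]^-{{\rm ev}_X^W}\ar[d]_-{{\rm ev}_X^P} && {\rm Hom}_{R^{\rm op}}(Y,W)\ar[d]^-{\tau} \\ {\rm Hom}_{S^{\rm op}}({\rm Hom}_S(X,P),P)\ar[rr]^-{(t_X)^{\ast}} && {\rm Hom}_{S^{\rm op}}(Y,P).}\]
For $x\in X$ and $f\in Y$, the top-right route sends $x$ to the map $f\mapsto \phi(f(x))$, while the bottom-left route sends $x$ to $f\mapsto {\rm ev}_X^P(x)(t_X(f))=t_X(f)(x)=\phi(f(x))$, so the square commutes. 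Since the right-hand vertical and bottom horizontal maps are bijective, ${\rm ev}_X^W$ is an isomorphism (resp. monomorphism) if and only if ${\rm ev}_X^P$ is, completing the proof. The main obstacle is essentially bookkeeping: checking that the right $S$-module structures used in Lemma \ref{lem:phi} line up so that $\tau$, $t_X$ and $(t_X)^{\ast}$ fit with ${\rm ev}_X^W$ and ${\rm ev}_X^P$ in the expected way; the commutativity itself is then an immediate calculation with $\phi$.
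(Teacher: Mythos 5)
Your proposal is correct and follows essentially the same route as the paper: reduce to $W$- and $P$-reflexivity via Lemma \ref{lem:wellknown}, then compare ${\rm ev}_X^W$ and ${\rm ev}_X^P$ through the two isomorphisms of Lemma \ref{lem:phi} (your $\tau$ is the paper's $t_{{\rm Hom}_R(X,W)}$ and your $(t_X)^{\ast}$ is ${\rm Hom}_{S^{\rm op}}(t_X,P)$), checking that both composites send $x$ to $f\mapsto\phi(f(x))$. The commutative square you write is exactly the paper's diagram, so nothing further is needed.
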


\begin{proof} Denote by $\phi$ the reflexive homomorphism of the ring extension. Recall from Lemma \ref{lem:phi} the isomorphisms
$t_X$ and $t_{{\rm Hom}_R(X, W)}$. The following diagram commutes
\[\xymatrix{
X\ar@{=}[dd]\ar[r]^-{{\rm ev}_X^W} & {\rm Hom}_{R^{\rm op}}({\rm Hom}_R(X, W), W)\ar[d]^-{t_{{\rm Hom}_R(X, W)}}\\
                       &  {\rm Hom}_{S^{\rm op}}({\rm Hom}_R(X, W), P)\\
X \ar[r]^-{{\rm ev}_X^P}                       & {\rm Hom}_{S^{\rm op}}({\rm Hom}_S(X, P), P) \ar[u]_{{\rm Hom}_{S^{\rm op}}(t_X, P)}.
}\]
Indeed, both composite maps  send $x\in X$ to an element in ${\rm Hom}_{S^{\rm op}}({\rm Hom}_R(X, W), P)$, which maps
$g\in {\rm Hom}_R(X, W)$ to $\phi(g(x))$. It follows that the map ${\rm ev}_X^W$ is an isomorphism (resp. a monomorphism) if and only if so is ${\rm ev}_X^P$.  Applying Lemma \ref{lem:wellknown} twice, we are done with the proof.
\end{proof}

\section{Totally reflexive extensions}

In this section, we introduce the notion of totally reflexive extension and then prove Theorem. Before that, we
recall the notion of totally reflexive modules \cite{ABr, AM} and modules having finitely generated projective resolutions. We point out that totally reflexive extensions unify Gorenstein
algebras \cite{AH} and Frobenius extensions of the third kind \cite{KS}.

\subsection{Totally reflexive modules} Let $S$ a ring. Following \cite{AM}, a unbounded acyclic complex $P^\bullet=\cdots\rightarrow P^{-1}\rightarrow P^0\rightarrow P^1\rightarrow \cdots$ of $S$-modules is \emph{totally acyclic}, if each $P^i$ is finitely generated projective and the dual complex ${\rm Hom}_S(P^\bullet, S)$ is  also acyclic. A left $S$-module $X$ is \emph{totally reflexive} provided that there is a totally acyclic complex  $P^\bullet$ such that $X$ is isomorphic to its first cocycle $Z^1(P^\bullet)$. As is pointed out in the introduction, there are different terminologies for totally reflexive modules in the literature.

We denote by $S\mbox{-tref}$ the full subcategory of $S\mbox{-Mod}$ consisting of totally reflexive $S$-modules.
Observe that finitely generated projective modules are totally reflexive, and that totally reflexive modules
are reflexive. By \cite[Proposition 5.1]{AR} and its dual, the full subcategory $S\mbox{-tref}$ is closed under finite direct sums, direct summands and extensions.

 Recall that a left $S$-module $X$ is said to have  a \emph{finitely generated projective resolution} provided that there is a projective resolution $\cdots \rightarrow P^{-2}\rightarrow P^{-1}\rightarrow P^0\rightarrow X\rightarrow 0$ with each $P^{-i}$ finitely generated. We denote by $S\mbox{-fgpr}$ the full subcategory of $S\mbox{-Mod}$ consisting of such modules. Observe that $S\mbox{-tref}\subseteq S\mbox{-fgpr}$.

The following result is well known; compare \cite[(3.8)]{ABr} and \cite{EJ1}. In particular, the notion of
totally reflexive module is ``coordinate-free" (\cite[4.2]{Buc}).

\begin{lem}\label{lem:totally}
Let $P$ be an invertible $S$-bimodule. Then a left $S$-module $X$ is totally reflexive if and only if the following conditions are satisfied:
\begin{enumerate}
\item both $_SX$ and  ${\rm Hom}_S(X, P)$ have finitely generated projective resolutions;
\item $_SX$ is $P$-reflexive;
\item ${\rm Ext}^n_S(X, P)=0$ for $n\geq 1$;
\item ${\rm Ext}^n_{S^{\rm op}}({\rm Hom}_S(X, P), P)=0$ for $n\geq 1$.
\end{enumerate}
In particular, for a totally reflexive $S$-module $_SX$, the right $S$-module ${\rm Hom}_S(X, P)$  is also totally reflexive.
\end{lem}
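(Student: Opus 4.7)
The plan is to reduce the general statement to the classical case $P = S$, where the equivalence of conditions (1)--(4) with total reflexivity is the Auslander--Bridger characterization \cite[(3.8)]{ABr} (see also \cite{EJ1}). The reduction rests on three ingredients: the natural right $S$-module isomorphism $t_X \colon {\rm Hom}_S(X,S) \otimes_S P \stackrel{\sim}\longrightarrow {\rm Hom}_S(X,P)$ from the proof of Lemma \ref{lem:wellknown}; the fact from Lemma \ref{lem:inv}(3) that $-\otimes_S P$ is an exact auto-equivalence of right $S$-modules preserving finitely generated projectives; and Lemma \ref{lem:wellknown} itself, which directly identifies condition (2) with its $S$-version.

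With these in hand, I would translate conditions (1), (3), and (4) between their $P$-- and $S$--versions. For (1), the equivalence $-\otimes_S P$ carries a finitely generated projective resolution of ${\rm Hom}_S(X,S)$ to one of ${\rm Hom}_S(X,P)$, and conversely via its quasi-inverse. For (3), applying $-\otimes_S P$ termwise to the complex ${\rm Hom}_S(R^\bullet, S)$ arising from a projective resolution $R^\bullet$ of $X$ and using $t$ yields ${\rm Ext}^n_S(X,P) \simeq {\rm Ext}^n_S(X,S) \otimes_S P$, so faithfulness of the equivalence transfers the vanishing. For (4), the tensor-hom adjunction together with ${\rm Hom}_{S^{\rm op}}(P,P) \simeq S$ from Lemma \ref{lem:inv}(2) produces a natural isomorphism ${\rm Hom}_{S^{\rm op}}({\rm Hom}_S(X,S) \otimes_S P, P) \simeq {\rm Hom}_{S^{\rm op}}({\rm Hom}_S(X,S), S)$ that extends termwise to a finitely generated projective resolution, giving ${\rm Ext}^n_{S^{\rm op}}({\rm Hom}_S(X,P), P) \simeq {\rm Ext}^n_{S^{\rm op}}({\rm Hom}_S(X,S), S)$. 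The classical $P = S$ case then closes the equivalence of (1)--(4) with total reflexivity.

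For the ``in particular'' statement, I would apply the right-module version of the just-proved criterion to $Y := {\rm Hom}_S(X,P)$, regarded as a right $S$-module. By Lemma \ref{lem:wellknown} and condition (2), there is a canonical isomorphism ${\rm Hom}_{S^{\rm op}}(Y, P) \simeq X$; the two halves of (1) for $X$ then supply the two halves of the right-module analogue of (1) for $Y$, while the right-module analogues of (3) and (4) for $Y$ become exactly (4) and (3) for $X$. The $P$-reflexivity of $Y_S$ follows from the compatibility of this identification with the evaluation map. Hence $Y$ satisfies the right-handed criterion and is totally reflexive.

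The main technical obstacle is the bookkeeping in the Ext translations for (3) and (4): one must check that the isomorphisms produced by $-\otimes_S P$ and by tensor-hom adjunction are natural enough to extend to projective resolutions and induce genuine Ext-isomorphisms, rather than only isomorphisms on Hom. Once these naturality compatibilities are settled, the argument reduces cleanly to the classical Auslander--Bridger criterion.
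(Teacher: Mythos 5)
Your proof is correct, and it takes a genuinely different route from the paper's. Your strategy is to translate each of conditions (1)--(4) into its $P=S$ counterpart via the natural isomorphism ${\rm Hom}_S(-,S)\otimes_S P\simeq {\rm Hom}_S(-,P)$, the exact auto-equivalence $-\otimes_S P$ of $S^{\rm op}\mbox{-Mod}$, and the adjunction identification ${\rm Hom}_{S^{\rm op}}(-\otimes_S P,P)\simeq {\rm Hom}_{S^{\rm op}}(-,S)$, and then to invoke the classical Auslander--Bridger characterization as a black box; all of these translations do go through (for condition (4) it is perhaps cleanest to observe that an exact equivalence induces isomorphisms on all ${\rm Ext}$ groups and carries $S_S$ to $P_S$, which sidesteps the naturality bookkeeping you flag). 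The paper instead proves the characterization from scratch: it uses the same isomorphism $t$ exactly once, at the level of complexes, to show that an acyclic complex of finitely generated projectives is totally acyclic if and only if its $P$-dual is acyclic; the ``only if'' direction then truncates a totally acyclic complex and dualizes it, and the ``if'' direction splices a projective resolution of $X$ with the ${\rm Hom}_{S^{\rm op}}(-,P)$-dual of a projective resolution of ${\rm Hom}_S(X,P)$, using $P$-reflexivity to glue the two halves. What the paper's construction buys is self-containedness and the fact that the ``in particular'' statement is immediate (the $P$-dual complex is itself totally acyclic with ${\rm Hom}_S(X,P)$ as a cocycle), whereas you must re-verify the four right-module conditions for ${\rm Hom}_S(X,P)$ --- which you do correctly. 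What your argument buys is brevity, at the cost of relying on the cited equivalence between the totally-acyclic-complex definition and the four-condition criterion in the case $P=S$ over an arbitrary ring; since the paper itself labels that case ``well known,'' this is an acceptable dependence, but it is precisely the content the paper chooses to prove rather than quote.
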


\begin{proof}
Let $P^\bullet$ be an acyclic complex of finitely generated projective left $S$-modules. The isomorphism (\ref{equ:2}) induces an
isomorphism ${\rm Hom}_S(P^\bullet, P)\simeq {\rm Hom}_S(P^\bullet, S)\otimes_S P$ of complexes of right $S$-modules. Recall from Lemma \ref{lem:inv}(3) that the functor $-\otimes_S P$ is an auto-equivalence on $S^{\rm op}\mbox{-Mod}$. Then $P^\bullet$ is totally acyclic if and only if the complex ${\rm Hom}_S(P^\bullet, P)$ is acyclic.

For the ``only if" part, assume that $P^\bullet$ is a totally acyclic complex with $Z^1(P^\bullet)\simeq X$. Then the sub complex
$\cdots \rightarrow P^{-2} \rightarrow P^{-1}\rightarrow P^0\rightarrow 0 \rightarrow \cdots$ is a projective resolution of $X$. Hence the acyclicity of ${\rm Hom}_S(P^\bullet, P)$ implies  (3). It further implies that $\cdots \rightarrow {\rm Hom}_S(P^3, P) \rightarrow {\rm Hom}_S(P^2, P)\rightarrow {\rm Hom}_S(P^1, P)\rightarrow 0$ is a projective resolution of the right $S$-module ${\rm Hom}_S(X, P)$. Since each right $S$-module ${\rm Hom}_S(P^i, P)$ is finitely generated projective, we have (1). Applying ${\rm Hom}_{S^{\rm op}}(-, P)$ to this resolution, we deduce (2) and (4) from the acyclicity of $P^\bullet$; here, we use the isomorphism  $P^\bullet \simeq {\rm Hom}_{S^{\rm op}}({\rm Hom}_S(P^\bullet, P), P)$ of complexes, that is  induced by the evaluation maps ${\rm ev}_{P_i}^P$.

For the ``if" part, take projective resolutions $\cdots \rightarrow P^{-2}\rightarrow P^{-1}\rightarrow P^0\rightarrow X\rightarrow 0$ and $\cdots \rightarrow Q^{-2} \rightarrow Q^{-1}\rightarrow Q^{0}\rightarrow {\rm Hom}_{S}(X, P)\rightarrow 0$, where each $P^{-i}$ and $Q^{-j}$ are finitely generated. By (3), the complex $0\rightarrow {\rm Hom}_{S^{\rm op}}({\rm Hom}_S(X, P), P)\rightarrow P^1\rightarrow P^2\rightarrow P^3\rightarrow \cdots $ is acyclic, where $P^i={\rm Hom}_{S^{\rm op}}(Q^{-(i-1)}, P)$ are finitely generated projective for $i\geq 1$. Since $_SX$ is $P$-reflexive, we may splice the two complexes of left $S$-modules into an acyclic complex $P^\bullet$ of finitely generated projective left $S$-modules. Observe that $X\simeq Z^1(P^\bullet)$. By the discussion in the first paragraph, the complex $P^\bullet$ is totally acyclic.
\end{proof}

The following observation will be used in the next subsection.

\begin{lem}\label{lem:fgpr}
Let $S$ be a ring. Then the following statements hold:
\begin{enumerate}
\item the subcategory $S\mbox{-{\rm fgpr}}$ of $S\mbox{-{\rm Mod}}$ is closed under finite direct sums, direct summands and extensions;
\item an $S$-module $_SX$ lies in $S\mbox{-{\rm fgpr}}$ if and only if there is an acyclic complex $\cdots \rightarrow G^{-2}\rightarrow G^{-1}\rightarrow G^0\rightarrow X \rightarrow 0$ of $S$-modules such that each $G^{-i}$ lies in $S\mbox{-{\rm fgpr}}$.
\end{enumerate}
\end{lem}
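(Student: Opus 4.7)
The plan is to prove (1) first, then to use it for (2). For (1), closure under direct sums is immediate by summing finitely generated projective resolutions termwise. For closure under extensions $0\to A\to B\to C\to 0$ with $A,C\in S\mbox{-fgpr}$, I would invoke the Horseshoe Lemma: given finitely generated projective resolutions $P_A^\bullet\to A$ and $P_C^\bullet\to C$, one builds a finitely generated projective resolution of $B$ whose degree-$(-n)$ term is $P_A^{-n}\oplus P_C^{-n}$. For the summand case, assume $X\oplus Y\in S\mbox{-fgpr}$. Both $X$ and $Y$ are finitely generated, being summands of a finitely generated module. I would construct finitely generated projective partial resolutions of $X$ and $Y$ in parallel by induction on the length: at step $n$, the termwise direct sum of the length-$n$ partial resolutions is a finitely generated projective partial resolution of $X\oplus Y$ whose $n$-th syzygy is $\Omega^n X\oplus\Omega^n Y$; since $X\oplus Y\in S\mbox{-fgpr}$, all its syzygies are finitely generated (by Schanuel's Lemma this property is independent of the chosen finitely generated projective partial resolution), so both $\Omega^n X$ and $\Omega^n Y$ are finitely generated, and the construction extends to length $n+1$.

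For (2), the direction ``$\Rightarrow$'' is tautological: a finitely generated projective resolution of $X$ is itself such an acyclic complex, since finitely generated projective modules belong to $S\mbox{-fgpr}$. The substantive direction is ``$\Leftarrow$''. Given an acyclic complex $\cdots\to G^{-1}\to G^0\to X\to 0$ with every $G^{-i}\in S\mbox{-fgpr}$, I would choose finitely generated projective resolutions $P_i^\bullet\to G^{-i}$ for each $i\geq 0$ and lift the differential $G^{-i-1}\to G^{-i}$ to a chain map $f_i\colon P_{i+1}^\bullet\to P_i^\bullet$; these exist by projectivity of the $P_{i+1}^{-j}$ and exactness of $P_i^\bullet$. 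Assembling the $P_p^{-q}$ into a double complex indexed by $(p,q)$ with $p,q\geq 0$, I consider the total complex $T^{-k}=\bigoplus_{p+q=k}P_p^{-q}$ with the standard sign convention. For each $k\geq 0$ the sum ranges over only the $k+1$ pairs with $p+q=k$ and $p,q\geq 0$, so $T^{-k}$ is a finite direct sum of finitely generated projectives, hence finitely generated projective. The column-filtration spectral sequence has $E_1$ concentrated in row $q=0$ with entries $G^{-p}$ (the rows being exact resolutions of the $G^{-p}$), and $d_1$ the differential of the complex $\cdots\to G^{-1}\to G^0$; acyclicity of $G^\bullet\to X\to 0$ then collapses $E_2$ to $X$ at $(0,0)$ and zero elsewhere. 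Hence $T^\bullet\to X$ is a finitely generated projective resolution, so $X\in S\mbox{-fgpr}$.

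The main obstacle is the possibly infinite length of the acyclic complex in (2): a naive induction splicing the short exact sequences $0\to Y^{n+1}\to G^{-n-1}\to Y^n\to 0$ (with $Y^n$ the image of $G^{-n-1}\to G^{-n}$) would require every iterated syzygy $Y^n$ to lie in $S\mbox{-fgpr}$, leading to an infinite regress. The double complex construction sidesteps this precisely because each degree $T^{-k}$ aggregates only the finitely many pairs $(p,q)$ with $p+q=k$, so the finite-generation condition survives without needing the given complex $G^\bullet$ to be bounded.
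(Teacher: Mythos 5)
Your proof of (1) is correct, and in fact more self-contained than the paper's, which simply cites the dual of \cite[Proposition 5.1]{AR}; the Horseshoe Lemma handles extensions and your Schanuel argument handles summands. The ``only if'' direction of (2) is indeed trivial. The genuine gap is in the ``if'' direction of (2), at the step where you assemble the resolutions $P_p^\bullet$ of the $G^{-p}$ into a double complex. Lifting the differential $G^{-p-1}\rightarrow G^{-p}$ to a chain map $f_p\colon P_{p+1}^\bullet\rightarrow P_p^\bullet$ only guarantees that $f_p\circ f_{p+1}$ is a lift of the zero map, hence null-homotopic; it need not be zero. So the horizontal ``differential'' does not square to zero, the array $(P_p^{-q})$ is not a double complex, and the total differential $d=d^v\pm d^h$ satisfies $d^2=(d^h)^2\neq 0$ in general: $T^\bullet$ is not a complex, and there is nothing for the column-filtration spectral sequence to compute. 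The standard repair---a Cartan--Eilenberg resolution of $G^\bullet$---does yield a genuine double complex, but its columns are assembled from chosen projective resolutions of the boundaries ${\rm im}(G^{-p-1}\rightarrow G^{-p})$, and at this stage nothing guarantees those boundaries admit \emph{finitely generated} projective resolutions; that is exactly the infinite regress you set out to avoid, resurfacing inside the construction.

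The paper sidesteps all of this with a one-step reduction: writing $\mathcal{G}$ for the class of modules admitting an acyclic complex as in (2), it pulls back $G^{-1}\rightarrow G^0$ against a surjection $P^0\rightarrow G^0$ from a finitely generated projective with kernel in $S\mbox{-{\rm fgpr}}$, and uses part (1) to show that the kernel $X'$ of the induced surjection $P^0\rightarrow X$ again lies in $\mathcal{G}$; iterating builds the finitely generated projective resolution of $X$ one degree at a time, without ever controlling all the syzygies of $G^\bullet$ simultaneously. If you want to keep your more global viewpoint, an alternative fix is to show that $X$ is $FP_m$ for each fixed $m$ by a \emph{finite} downward splicing of the sequences $0\rightarrow Y^{p+1}\rightarrow G^{-p}\rightarrow Y^p\rightarrow 0$ for $0\leq p\leq m$ (only finitely many terms of $G^\bullet$ intervene), and then use your Schanuel argument to pass from ``$FP_m$ for all $m$'' to membership in $S\mbox{-{\rm fgpr}}$. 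Either way, some device must replace the total complex, which as written does not exist.
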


\begin{proof}
(1) is a consequence of the dual of \cite[Proposition 5.1]{AR}. The ``only if" part of (2) is trivial.

For the ``if" part, denote by $\mathcal{G}$ the class of $S$-modules $X$, which fit into some acyclic complex  $\cdots \rightarrow G^{-2}\rightarrow G^{-1}\rightarrow G^0\rightarrow X \rightarrow 0$ with each $G^{-i}\in S\mbox{-{\rm fgpr}}$. It suffices to show that for each $X\in\mathcal{G}$, there is an exact sequence $0\rightarrow X'\rightarrow P^0 \rightarrow X\rightarrow 0$ with $_SP^0$ finitely generated projective and
$_SX'\in \mathcal{G}$.

Take an exact sequence $0\rightarrow X''\rightarrow G^{-1}\stackrel{f}\rightarrow G^0\rightarrow X\rightarrow 0$ with $G^{-i}\in S\mbox{-{\rm fgpr}}$ and $X''\in \mathcal{G}$. Since $G^0$ lies in $S\mbox{-{\rm fgpr}}$, we may take an exact sequence $0\rightarrow G'\rightarrow P^0\stackrel{g}\rightarrow G^0\rightarrow 0$ with $_SP^0$ finitely generated projective and $G'\in S\mbox{-{\rm fgpr}}$. Take the pullback of $f$ and $g$. We have the following commutative exact diagram
\[\xymatrix{
0\ar[r] & X'' \ar@{=}[d]\ar[r] & E \ar@{.>}[d] \ar@{.>}[r] & P^0 \ar[d]^-{g} \ar[r]^-{h} & X \ar@{=}[d] \ar[r] & 0\\
0\ar[r] & X'' \ar[r] & G^{-1} \ar[r]^-{f} & G^0 \ar[r] & X \ar[r] & 0.}\]
Observe the exact sequence $0\rightarrow G'\rightarrow E\rightarrow G^{-1}\rightarrow 0$ induced by the pullback. Applying (1), we have that $E$ lies in $S\mbox{-{\rm fgpr}}$. It follows that $X'={\rm Ker}\; h$ lies in $\mathcal{G}$. This gives us the required exact sequence. \end{proof}

\subsection{Totally reflexive extensions} We introduce the notion of totally reflexive extension. The main result claims that in a totally reflexive extension, a module over the extension ring is totally reflexive if and only if so is the underlying module over the base ring; see Theorem \ref{thm:1}.

\begin{defn}\label{defn:2}
Let $\theta\colon S\rightarrow R$ be a $P$-$W$-reflexive extension for  an invertible $S$-bimodule $P$ and an invertible $R$-bimodule $W$. The ring extension $\theta\colon S\rightarrow R$ is called \emph{totally $P$-$W$-reflexive} provided  in addition that the left $S$-module $_SR$ is  totally reflexive.\par
In case that $P={_SS_S}$ and $W={_RR_R}$ are the regular bimodules, a totally $P$-$W$-reflexive extension is called a \emph{totally reflexive extension}. \hfill $\square$
\end{defn}

The following observation implies that the notion of totally reflexive extension is left-right symmetric; compare  Proposition \ref{prop:symm}.

\begin{lem}
Let  $\theta\colon S\rightarrow R$ be a totally $P$-$W$-reflexive extension. Then the right $S$-module $R_S$ is
totally reflexive.
\end{lem}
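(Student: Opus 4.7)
The plan is to deduce totally reflexivity of $R_S$ from totally reflexivity of $_SR$ by chaining two facts: the duality built into Lemma \ref{lem:totally}, and the fact that an invertible $R$-bimodule is a progenerator on each side.

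First, I would apply the ``in particular'' clause of Lemma \ref{lem:totally} to the totally reflexive left $S$-module $_SR$ and the invertible $S$-bimodule $P$. This immediately gives that the right $S$-module ${\rm Hom}_S(R, P)$ is totally reflexive. By Definition \ref{defn:1}(2), which is part of the hypothesis of totally $P$-$W$-reflexivity, there is an isomorphism ${\rm Hom}_S(R, P) \simeq W$ of $R$-$S$-bimodules, hence in particular an isomorphism of right $S$-modules. Consequently $W_S$ is totally reflexive.

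Next, I would pass from $W_S$ to $R_S$ using the invertibility of the $R$-bimodule $W$. By Lemma \ref{lem:inv}(1), the right $R$-module $W_R$ is a finitely generated projective generator. Since $R_R$ is finitely generated, the generator property yields a split surjection $W^n \to R$ of right $R$-modules for some finite $n$; that is, $R_R$ is a direct summand of $W^n$ as a right $R$-module. Restricting scalars along $\theta$, $R_S$ is a direct summand of $(W_S)^n$ as a right $S$-module.

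Finally, since the class of totally reflexive right $S$-modules is closed under finite direct sums and direct summands (by \cite[Proposition 5.1]{AR} and its dual, as recalled in the excerpt), $(W_S)^n$ is totally reflexive and so is its direct summand $R_S$. I do not expect any real obstacle; the only point requiring mild care is the elementary argument that a finitely generated progenerator makes $R$ a summand of a \emph{finite} power, but this is immediate from finite generation of $R_R$.
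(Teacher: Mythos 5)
Your proof is correct and follows essentially the same route as the paper: transfer total reflexivity from $_SR$ to $W_S$ via the last clause of Lemma \ref{lem:totally} and the bimodule isomorphism ${\rm Hom}_S(R,P)\simeq W$, then realize $R_S$ as a direct summand of a finite direct sum of copies of $W_S$ using that $W_R$ is a progenerator, and conclude by closure of $S^{\rm op}\mbox{-tref}$ under finite direct sums and summands. No gaps.
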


\begin{proof}
Recall the isomorphism ${\rm Hom}_S(R, P)\simeq W$ of $R$-$S$-bimodules. By the assumption, $_SR$ is totally reflexive, and
thus by Lemma \ref{lem:totally}  so is the right $S$-module $W$. Recall that the subcategory $S^{\rm op}\mbox{-tref}$ is closed under finite direct sums and direct summands, and that the right $R$-module $W_R$ is a projective generator; see Lemma \ref{lem:inv}(1).  The right $S$-module $R_S$ is a direct summand of a finite direct sum of copies of $W_S$. It follows that  $R_S$ is totally reflexive.
\end{proof}

\begin{exm}\label{exm:1}
{\rm (1)} {\rm Let $S$ be a commutative ring and  $\theta\colon S\rightarrow R$ be an $S$-algebra. We call the $S$-algebra $R$ \emph{totally quasi-reflexive} provided that the ring extension $\theta$ is totally $S$-$W$-reflexive for an $R$-bimodule $W$; it is \emph{totally reflexive}, provided in addition that there is an isomorphism ${\rm Hom}_S(R, S)\simeq R$ of $R$-bimodules.

Let $S$ be a Gorenstein ring of finite Krull dimension. Then by Lemma \ref{lem:algebra}, the algebra $R$ is totally quasi-reflexive if and only if it  is a Gorenstein algebra in the sense of \cite[Definition 3.1]{AH}; the algebra $R$ is totally reflexive if and only if  it is a Gorenstein order in the sense of Auslander (\cite[Chapter III]{Aus}).}

{
{\rm (2)} \rm Let $\theta\colon S\rightarrow R$ be a $P$-$W$-reflexive extension. It is called \emph{$P$-$W$-Frobenius},  provided that the $S$-module $_SR$ is finitely generated projective, or equivalently, the right $S$-module $R_S$ is finitely generated projective. Observe that a $P$-$W$-Frobenius extension is totally $P$-$W$-reflexive.

 A $P$-$W$-Frobenius extension is called a \emph{Frobenius extension}, if $P=S$ and  $W=R$ are the regular bimodules. This classical notion goes to back to \cite{NT}. For example, the natural embedding $S\rightarrow SG$ of a ring $S$ into the group ring $SG$ of a finite group $G$ is a Frobenius extension.

We point out that a  $P$-$R$-Frobenius extension is exactly the $P$-Frobenius extension, or the Frobenius extension of the third kind in the sense of \cite[Definition 7.2]{KS}. \hfill $\square$}

\end{exm}

We now have the main result of the paper.

\begin{thm}\label{thm:1}
Let $\theta\colon S\rightarrow R$ be a totally $P$-$W$-reflexive extension for  an invertible $S$-bimodule $P$ and an invertible $R$-module $W$. Then a left $R$-module $_RX$ is totally reflexive if and only if the underlying $S$-module $_SX$ is totally reflexive.
\end{thm}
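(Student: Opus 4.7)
The plan is to apply Lemma~\ref{lem:totally} on each side: $_SX$ is totally reflexive iff the four conditions (1)--(4) hold with respect to $P$, and $_RX$ is totally reflexive iff the analogous conditions hold with respect to $W$. I will then show each of these transfers across $\theta$. Condition (2), the reflexivity, is handled immediately: Lemma~\ref{lem:wellknown} reduces $P$- and $W$-reflexivity to plain reflexivity, and Proposition~\ref{prop:2} equates ``$_SX$ reflexive'' with ``$_RX$ reflexive''.

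For the vanishing conditions (3) and (4), the key is that any projective $R$-resolution $Q^\bullet \to X$ is simultaneously an $S$-acyclic resolution of $X$ by modules that are ${\rm Hom}_S(-,P)$-acyclic. Indeed, each projective $R$-module $Q^i$ is a direct summand of a free $R$-module, so as an $S$-module is a direct summand of a direct sum of copies of $_SR$. Since $_SR$ is totally reflexive, Lemma~\ref{lem:totally} gives ${\rm Ext}^n_S(R,P)=0$ for $n\geq 1$, and this vanishing passes through direct sums in the first variable and to direct summands; hence ${\rm Ext}^n_S(Q^i,P)=0$, so $H^n({\rm Hom}_S(Q^\bullet,P))\simeq{\rm Ext}^n_S(X,P)$. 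On the other hand, Lemma~\ref{lem:phi}(1) identifies ${\rm Hom}_R(Q^\bullet,W)$ with ${\rm Hom}_S(Q^\bullet,P)$ naturally as a complex of right $S$-modules, and the left side computes ${\rm Ext}^n_R(X,W)$. Thus ${\rm Ext}^n_R(X,W)\simeq{\rm Ext}^n_S(X,P)$, transferring (3). A symmetric argument --- invoking the lemma just before this theorem that $R_S$ is totally reflexive, Lemma~\ref{lem:phi}(2), and a projective $R^{\rm op}$-resolution of ${\rm Hom}_R(X,W)$ whose underlying right $S$-module is ${\rm Hom}_S(X,P)$ --- transfers (4).

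For the finiteness condition (1), I apply Lemma~\ref{lem:fgpr}. If $_RX\in R\mbox{-fgpr}$, then a finitely generated projective $R$-resolution of $X$, regarded over $S$, is an acyclic complex with each term in $S\mbox{-fgpr}$ (being a direct summand of some $R^n$, with $_SR\in S\mbox{-fgpr}$ by Lemma~\ref{lem:totally}), so Lemma~\ref{lem:fgpr}(2) delivers $_SX\in S\mbox{-fgpr}$. Conversely, $_SX\in S\mbox{-fgpr}$ means $_SX$, hence $_RX$, is finitely generated; choose $R^{n_0}\twoheadrightarrow X$ with kernel $K_0$. As $S$-modules $R^{n_0},X\in S\mbox{-fgpr}$, forcing $K_0\in S\mbox{-fgpr}$ by the two-out-of-three closure of this class under short exact sequences --- a standard strengthening of Lemma~\ref{lem:fgpr}(1). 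Iterating yields a finitely generated projective $R$-resolution of $_RX$; the same argument applied on the right gives ${\rm Hom}_R(X,W)\in R^{\rm op}\mbox{-fgpr}$. The main obstacle I anticipate is exactly this converse half of~(1) --- constructing an $R$-projective resolution out of $S$-side finiteness --- which hinges on the two-out-of-three closure of $S\mbox{-fgpr}$ under short exact sequences that is not explicitly recorded in Lemma~\ref{lem:fgpr} and must be extracted from its proof.
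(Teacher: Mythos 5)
Your argument is correct and follows essentially the same route as the paper's: reduce to the four conditions of Lemma \ref{lem:totally} on each side, transfer condition (2) via Lemma \ref{lem:wellknown} and Proposition \ref{prop:2}, and transfer (3) and (4) by computing ${\rm Ext}$ from a projective $R$-resolution whose terms are ${\rm Hom}_S(-,P)$-acyclic because ${\rm Ext}^k_S(R,P)=0$ (resp.\ the right-module version, using that $R_S$ is totally reflexive), with the comparison of complexes supplied by Lemma \ref{lem:phi}. The one point of divergence is the converse half of condition (1). The paper isolates this transfer as Lemma \ref{lem:fgpr2} and proves the ``if'' direction by pulling back an $R$-projective cover $P^0\twoheadrightarrow X$ against an $S$-projective cover $Q\twoheadrightarrow X$: the pullback $E$ splits as $X'\oplus Q$ and sits in $0\to Y\to E\to P^0\to 0$, so only closure of $S\mbox{-fgpr}$ under extensions and direct summands (Lemma \ref{lem:fgpr}(1)) is needed. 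You instead invoke a two-out-of-three property: in $0\to K_0\to R^{n_0}\to X\to 0$ with middle and right terms in $S\mbox{-fgpr}$, the kernel lies in $S\mbox{-fgpr}$. That is true --- it is the standard fact that if $B$ is of type $FP_n$ and $C$ of type $FP_{n+1}$ then the kernel is $FP_n$, proved by a Schanuel-type dimension shift --- but, as you yourself flag, it is not recorded in the paper and does not follow formally from Lemma \ref{lem:fgpr}(1), so you would need to supply its short proof (or substitute the pullback trick, which avoids it). With that supplied, and with the identification $t_X\colon{\rm Hom}_R(X,W)\simeq{\rm Hom}_S(X,P)$ of right $S$-modules made explicit where you transfer the finiteness and the ${\rm Ext}$-vanishing for the dual module, your proof is complete.
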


For the proof, we need the following general fact.

\begin{lem}\label{lem:fgpr2}
Let $\theta\colon S\rightarrow R$ be a ring extension with the left $S$-module $_SR\in S\mbox{-{\rm fgpr}}$. Then a left $R$-module $_RX$ lies in $R\mbox{-{\rm fgpr}}$ if and only if the underlying $S$-module $_SX$ lies in $S\mbox{-{\rm fgpr}}$.
\end{lem}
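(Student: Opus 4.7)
The plan is to handle the two directions separately. The ``only if'' direction will be essentially a direct application of Lemma \ref{lem:fgpr}, while the ``if'' direction will require an inductive construction of a resolution over $R$, with the main work lying in a closure property of $S\mbox{-fgpr}$ under taking kernels.

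For ``only if'', I would start from a resolution $\cdots\to P^{-1}\to P^0\to X\to 0$ by finitely generated projective $R$-modules. Each $P^{-i}$ is a direct summand of some $R^{n_i}$; combining the hypothesis $_SR\in S\mbox{-fgpr}$ with closure of $S\mbox{-fgpr}$ under finite direct sums and direct summands (Lemma \ref{lem:fgpr}(1)) gives $_SP^{-i}\in S\mbox{-fgpr}$. The resolution is then an acyclic complex of $S$-modules whose terms all lie in $S\mbox{-fgpr}$, and Lemma \ref{lem:fgpr}(2) immediately yields $_SX\in S\mbox{-fgpr}$.

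For ``if'', I would build a resolution of $X$ over $R$ by induction. Since $_SX\in S\mbox{-fgpr}$, $X$ is finitely generated over $S$ and hence over $R$, so pick a surjection $R^{n_0}\twoheadrightarrow X$ of $R$-modules with kernel $K_0$. To iterate one needs $_SK_0\in S\mbox{-fgpr}$, and this reduces to the following general claim: if $0\to A\to B\to C\to 0$ is a short exact sequence of $S$-modules with $B,C\in S\mbox{-fgpr}$, then $A\in S\mbox{-fgpr}$. To prove the claim, take a surjection $Q\twoheadrightarrow C$ from a finitely generated projective $S$-module $Q$; the kernel $\Omega C$ lies in $S\mbox{-fgpr}$ since a projective resolution of $C$ can be chopped off. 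Form the pullback $E=Q\times_C B$, yielding two short exact sequences: $0\to A\to E\to Q\to 0$, which splits because $Q$ is projective and gives $E\cong A\oplus Q$; and $0\to \Omega C\to E\to B\to 0$, which rewrites as $0\to \Omega C\to A\oplus Q\to B\to 0$. Closure of $S\mbox{-fgpr}$ under extensions then forces $A\oplus Q\in S\mbox{-fgpr}$, and closure under summands gives $A\in S\mbox{-fgpr}$. Applied to $0\to K_0\to R^{n_0}\to X\to 0$ (with $_SR^{n_0},{}_SX\in S\mbox{-fgpr}$) this yields $_SK_0\in S\mbox{-fgpr}$; iterating the same step on $K_0$, $K_1$, \ldots\ produces the desired finitely generated projective resolution $\cdots\to R^{n_1}\to R^{n_0}\to X\to 0$ over $R$.

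The main obstacle is precisely the closure-under-kernels claim for $S\mbox{-fgpr}$; once that is established by the pullback trick, the rest of both directions is essentially bookkeeping. An alternative to the pullback step would be Schanuel's lemma, but since $R^{n_0}$ need not be $S$-projective, one cannot compare $K_0$ directly with a syzygy of $X$ by Schanuel, which is exactly why I prefer to route the argument through the extension-closure already supplied by Lemma \ref{lem:fgpr}(1).
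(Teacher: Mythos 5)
Your proof is correct and follows essentially the same route as the paper: the ``only if'' direction is identical, and your closure-under-kernels claim, proved by pulling back an $S$-projective cover of $C$ against $B\to C$ and using the split sequence $E\cong A\oplus Q$ together with extension- and summand-closure of $S\mbox{-fgpr}$, is exactly the paper's inline pullback argument with $B=P^0$ and $C=X$ (the paper's $Y$ is your $\Omega C$). The only cosmetic difference is that you isolate this step as a standalone lemma rather than performing it inside the induction.
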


\begin{proof}
For the ``only if" part,take a projective $R$-resolution $\cdots \rightarrow P^{-2}\rightarrow P^{-1}\rightarrow P^0\rightarrow X\rightarrow 0$ with each $P^{-i}$ finitely generated. Recall that the subcategory $S\mbox{-{\rm fgpr}}$ is closed under finite direct sums and direct summands. From the assumption, we infer that each  $S$-module $P^{-i}$ lies in $S\mbox{-{\rm fgpr}}$. By Lemma \ref{lem:fgpr}(2), we have that  $_SX$ lies in $S\mbox{-{\rm fgpr}}$.

For the ``if" part, we assume that $_SX$ lies in $S\mbox{-{\rm fgpr}}$. In particular, the $R$-module $X$ is finitely generated.
Take an exact sequence $0\rightarrow X'\rightarrow P^0\stackrel{f}\rightarrow X\rightarrow 0$ of $R$-modules with $P^0$ finitely generated projective. Using induction, it suffices to show that the underlying $S$-module of $X'$ lies in $S\mbox{-{\rm fgpr}}$.

Take an exact sequence $0\rightarrow Y\rightarrow Q \stackrel{g}\rightarrow X\rightarrow 0$ of $S$-modules with $_SQ$ finitely generated projective and $_SY\in S\mbox{-{\rm fgpr}}$. Consider the following commutative exact diagram of $S$-modules, that is induced by the pullback of $f$ and $g$
\[\xymatrix{
0\ar[r] & X'\ar@{=}[d] \ar[r] &  E\ar@{.>}[r] \ar@{.>}[d] & Q\ar[d]^-{g} \ar[r] & 0\\
0\ar[r] & X' \ar[r] &  P^0\ar[r]^-{f} &  X \ar[r] & 0.
}\]
The upper row splits, and thus $E\simeq X'\oplus Q$. Consider the exact sequence $0\rightarrow Y\rightarrow E\rightarrow P^0\rightarrow 0$ induced by the pullback. From the assumption, we have that $_SP^0$ lies in $S\mbox{-{\rm fgpr}}$. Recall from Lemma \ref{lem:fgpr}(1) that  the subcategory $S\mbox{-{\rm fgpr}}$ is closed under extensions and direct summands.  We have that $_SE$ lies in $S\mbox{-{\rm fgpr}}$, and thus $_SX'$ also lies in $S\mbox{-{\rm fgpr}}$.
\end{proof}

We are in a position to prove Theorem \ref{thm:1}.

\vskip 5pt

\noindent \emph{Proof of Theorem} \ref{thm:1}. Recall from Lemma \ref{lem:totally} the  conditions (1)-(4)
for totally reflexive modules. Indeed, we will show that, for each  $1\leq i\leq 4$,  the $R$-module $_RX$, with respect to the invertible $R$-bimodule $W$, satisfies the condition ($i$) if and only if so does the underlying $S$-module $_SX$, with respect to the invertible $S$-bimodule $P$.

For  (1), recall the isomorphism in Lemma \ref{lem:phi}(1). Applying Lemma \ref{lem:fgpr2} and its counterpart for right modules, we have (1).

For  (2), apply Lemma \ref{lem:wellknown} and Proposition \ref{prop:2}.

For  (3), it suffices to show that there is an isomorphism
$${\rm Ext}^n_R(X, W)\simeq {\rm Ext}^n_S(X, P)$$
of right $S$-modules, for each $n\geq 1$.  Take a projective $R$-resolution $P^\bullet=\cdots \rightarrow P^{-2}\rightarrow P^{-1}\rightarrow P^0\rightarrow 0 \rightarrow \cdots$ of $X$. The isomorphism in Lemma \ref{lem:phi}(1) induces an isomorphism ${\rm Hom}_R(P^\bullet, W)\simeq {\rm Hom}_S(P^\bullet, P)$ of complexes. Hence, we have ${\rm Ext}^n_R(X, W)\simeq H^n({\rm Hom}_S(P^\bullet, P))$; here, $H^n(-)$ denotes the $n$-th cohomology of a complex. By assumption, the left $S$-module $_SR$ is totally reflexive. In particular, by Lemma \ref{lem:totally}(3) we have ${\rm Ext}_S^k(R, P)=0$ for $k\geq 1$. It follows that each projective $R$-module $P^{-i}$ satisfies that ${\rm Ext}_S^k(P^{-i}, P)=0$ for $k\geq 1$. By \cite[2.4.3]{Weib}, this implies that $H^n({\rm Hom}_S(P^\bullet, P))\simeq {\rm Ext}_S^n(X, P)$.

For (4),  it suffices to show that there is an isomorphism
$${\rm Ext}^n_{R^{\rm op}}({\rm Hom}_R(X, W), W)\simeq  {\rm Ext}^n_{S^{\rm op}}({\rm Hom}_S(X, P), P)$$
for each $n\geq 1$. A similar argument as above yields an isomorphism $${\rm Ext}^n_{R^{\rm op}}({\rm Hom}_R(X, W), W)\simeq {\rm Ext}^n_{S^{\rm op}}({\rm Hom}_R(X, W), P).$$
Applying the isomorphism in Lemma \ref{lem:phi}(1), we are done. \hfill $\square$

\vskip 5pt

Recall that the notion of Gorenstein-projective module is a natural extension of
totally reflexive module to unnecessarily finitely generated modules; see \cite{EJ1}. We expect that
the following result holds:  for a totally $P$-$W$-reflexive extension $\theta\colon S\rightarrow R$,
an $R$-module $X$ is Gorenstein-projective if and only if the underlying $S$-module $X$ is Gorenstein-projective.
It seems that a different argument is needed, since the present one can not carry over to Gorenstein-projective modules.

\vskip10pt

%
%
%
%
%
%

\noindent {\bf Acknowledgements.} \; This research was partly done during the author's visit at the University of Bielefeld with a support by Alexander von Humboldt Stiftung. He would like to thank Professor Henning Krause and the faculty of Fakult\"{a}t f\"{u}r Mathematik for their hospitality.

\bibliography{}

\begin{thebibliography}{9999}

\bibitem{AH} {\sc H. Abe and M. Hoshino,} {\em Derived equivalences and Gorenstein algebras}, J. Pure Appl. Algebra
{\bf 211} (2007), 55--69.


\bibitem{Au}{\sc M. Auslander}, {\em Anneaux de Gorenstein, et torsion en alg\`{e}bre commutative}, S\'{e}minaire
d'alg\`{e}bre commutative dirig\'{e} par P. Samuel, Secr\'{e}tariat math\'{e}matique, Paris, 1967.


\bibitem{Aus} {\sc M. Auslander}, {\em Functors and morphisms determined by objects}, in: Representation Theory of Algebras (Proc. Conf.,  Temple Univ. Philadelphia, Pa., 1976), in: Lecture Notes in Pure Appl. Math. {\bf 37}, 1--224, Dekker, New York, 1978.


\bibitem{ABr} {\sc M. Auslander and M. Bridger}, {\em Stable module
category}, Mem. Amer. Math. Soc. {\bf 94}, 1969.


\bibitem{AR} {\sc M. Auslander and I. Reiten}, {\em Applications of contravariantly finite
subcategories}, Adv. Math. {\bf 86} (1)(1991), 111--152 .





\bibitem{AM} {\sc L.L. Avramov and A. Martsinkovsky,} {\em Absolute, relative and Tate
cohomology of modules of finite Gorenstein dimension}, Proc. London
Math. Soc. (3) {\bf 85} (2002), 393--440.




\bibitem{Buc} {\sc R.O. Buchweitz,} Maximal Cohen-Macaulay Modules
and Tate Cohomology over Gorenstein Rings, Unpublished Manuscript,
1987. Available at: http://hdl.handle.net/1807/16682.



\bibitem{Ch2} {\sc X.W. Chen}, {\em The stable monomorphism category of a Frobenius category,}
Math. Res. Lett. {\bf 18} (01) (2011), 125--137.


\bibitem{Ch3} {\sc X.W. Chen}, {\em Relative singularity categories and Gorenstein-projective
modules}, Math. Nachr. {\bf 284} (No. 2-3) (2011), 199--212.



 \bibitem{CFH} {\sc L.W. Christensen, H.-B. Foxby and H. Holm}, {\em Beyond totally reflexive modules and back: a survey on Gorenstein dimensions, } in: Commutative algebra---Noetherian and non-Noetherian perspectives, 101--143,
     Springer, New York, 2011.




\bibitem{EJ1} {\sc E.E. Enochs and O.M.G. Jenda,} {\em Gorenstein injective and projective
modules, } Math. Z. {\bf 220} (1995), 611--633.

%

\bibitem{Fai73} {\sc C. Faith,} Algebra: rings, modules and categories I, Springer-Verlag, Berlin
Heidelberg New York, 1973.


\bibitem{HS} {\sc Z. Huang and J. Sun}, {\em Invariant properties of representations under excellent extenions,}
J. Algebra, to appear.



\bibitem{Kad} {\sc L. Kadison,}  New examples of Frobenius extensions, Univ. Lecture Ser. {\bf  14}, Amer. Math. Soc.,
    Providence, RI, 1999.


\bibitem{KS} {\sc L. Kadison and A.A. Stolin}, {\em An approach to Hopf algebras via Frobenius coordinates II},
J. Pure Appl. Algebra {\bf 176} (2-3) (2002), 127--152.


\bibitem{NT} {\sc T. Nakayama and  T. Tsuzuku}, {\em On Frobenius extensions I}, Nagoya Math. J. {\bf 17} (1960) 89--110; {\sc T. Nakayama and T. Tsuzuku}, {\em On Frobenius extensions II}, Nagoya Math. J. {\bf 19} (1961) 127--148.


\bibitem{RZ} {\sc C.M. Ringel and P. Zhang}, {\em Representations of quivers over the algebra of dual numbers},
arXiv:1112.1924.



\bibitem{Yoh} {\sc Y. Yoshino,} Cohen-Macaulay modules over Cohen-Macaulay rings, London Math. Soc. Lecture
Note Ser. {\bf 146}, Cambridge Univ. Press, Cambridge, 1990.





\bibitem{Wei} {\sc J. Wei}, {\em Gorenstein homological theory for differential modules}, arXiv: 1202.4157.

\bibitem{Weib} {\sc C.A. Weibel}, An introduction to homological algebra, Cambridge Stud. Adv. Math. {\bf 38}, Cambridge Univ. Press, Cambridge, 1994.




\end{thebibliography}

\vskip 10pt

 {\footnotesize \noindent Xiao-Wu Chen\\
  School of Mathematical Sciences, University of Science and Technology of
China, Hefei 230026, Anhui, PR China \\
Wu Wen-Tsun Key Laboratory of Mathematics, USTC, Chinese Academy of Sciences, Hefei 230026, Anhui, PR China.\\
URL: http://mail.ustc.edu.cn/$^\sim$xwchen}

\end{document}